\numberwithin{equation}{section}
\def\A {\mathcal{A}}
\begin{document}

\title{Supersolvability and freeness for $\psi$-graphical
  arrangements\thanks{The first author would like
to thank the China Scholarship Council for
financial support. Her work was done during her visit to the
Department of Mathematics, Massachusetts Institute of Technology. The
second author was partially supported by the 
    National  Science Foundation under Grant No.~DMS-1068625.}}

\titlerunning{$\psi$-graphical arrangements}

\author{Lili Mu \and Richard P. Stanley}

\institute{Lili Mu \at Dept.\ of Mathematics\\ M.I.T.\\ Cambridge, MA
  02139\\ \emph{Present address:}   School of Mathematical Sciences\\
         Dalian University of Technology\\
         Dalian 116024, PR China\\ \email{llymu.lzp@gmail.com} \and
         Richard Stanley \at Dept.\ of 
         Mathematics E17-434\\ M.I.T.\\Ambridge, MA
         02139\\ \email{rstan@math.mit.edu}}

\date{Received: date / Accepted: date}

\maketitle
\begin{abstract}
Let $G$ be a simple graph on the vertex set $\{v_1,\dots,v_n\}$ with
edge set $E$. Let $K$ be a field.  The graphical arrangement $\A_G$
in $K^n$ is the arrangement $x_i-x_j=0, v_iv_j \in E$. An
arrangement $\A$ is 
supersolvable if the intersection lattice $L(c(\A))$ of the cone
$c(\A)$ contains a maximal chain of modular elements. The second
author has shown that a graphical arrangement $\A_G$ is supersolvable
if and only if $G$ is a chordal graph. He later considered a
generalization of graphical arrangements which are called
$\psi$-graphical arrangements. He conjectured a characterization of
the supersolvability and freeness (in the sense of Terao) of a
$\psi$-graphical arrangement. We provide a proof of the first
conjecture and state some conditions on free $\psi$-graphical
arrangements. 

\keywords{graphical arrangement\and supersolvable arrangement \and
  free arrangement\and chordal graph}
\subclass{52C35\and 05C15}
\end{abstract}

\section{Introduction}
\hspace*{\parindent}

A finite hyperplane arrangement $\A$ is a finite set of affine
hyperplanes in some vector space $V \cong K^n$, where $K$ is a
field. The \emph{intersection poset} $L(\A)$ of $\A$ is the set of all
nonempty intersections of hyperplanes in $\A$, including $V$ itself as
the intersection over the empty set, ordered by reverse
inclusion. Define the order relationship $x \leq y$ in $L(\A)$ if
$x\supseteq y$ in $V$.

Let $G$ be a graph with vertex set $V=\{v_1,\ldots,v_n\}$ and edge set
$E$. The \emph{graphical arrangement} $\A_G$ in $K^n$ is the arrangement
with hyperplane $x_i-x_j=0, v_iv_j \in E$. We will use poset notation
and terminology from \cite[Ch. 3]{4}. In particular, the intersection
poset of the graphical arrangement $\A_G$ (or of any central
arrangement) is geometric. (An arrangement $\A$ is central if
$\bigcap_{H \in \A} H \neq \emptyset$.)  Let $2^\mathbb{P}$ denote the
set of all subsets of $\mathbb{P}$, and let $\psi: V\rightarrow
2^\mathbb{P}$ satisfy $\vert \psi(v)\vert < \infty$ for all $v \in
V$. Define the $\psi$-$graphical$ $arrangement$ $\A_{G,\psi}$ to be
the arrangement in $\mathbb{R}^n$ with hyperplanes $x_i=x_j$ whenever
$v_iv_j \in E$, together with $x_i=\alpha_j$ if $\alpha_j \in
\psi(v_i)$.

In general, $\A_{G,\psi}$ is not a central arrangement and the
intersection poset $L(\A_{G,\psi})$ of $\A_{G,\psi}$ is not a
geometric lattice. Instead of $\A_{G,\psi}$ we consider the cone
$c(\A_{G,\psi})$ with coordinates $x_1,\dots,x_n,y$.  The \emph{cone
  $\psi$-graphical arrangement} $c(\A_{G,\psi})$ is the arrangement
with hyperplanes $x_i=x_j$ whenever $v_iv_j \in E$, together with
$y=0$ and $x_i=\alpha_j y$ if $\alpha_j \in \psi(v_i)$.

An element $x$ of a geometric lattice $L$ is \emph{modular} if
$\text{rk}(x)+\text{rk}(y)= \text{rk}(x\wedge y)+\text{rk}(x\vee y)$
for all $y\in L$, where rk denotes the rank function of $L$. A
geometric lattice $L$ is \emph{supersolvable} if there exists a
\emph{modular maximal chain}, i.e., a maximal chain $\hat{0}=x_0\lessdot
x_1\lessdot \cdots \lessdot x_n=\hat{1}$ such that each $x_i$ is
modular. A central arrangement $\A$ is \emph{supersolvable} if its
intersection lattice $L(\A)$ is supersolvable.

A graph is \emph{chordal} if each of its cycles of four or more
vertices has a chord, which is an edge that is not part of the cycle
but connects two vertices of the cycle. Equivalently, every induced
cycle in the graph should have exactly three vertices.  A graphical
arrangement $\A_G$ is supersolvable if and only if $G$ is a chordal
graph \cite[Cor. 4.10]{1}.

It is well known that the elements $X_\pi$ of $L(\A_G)$ correspond to
the connected partitions $\pi$ of $V(G)$, i.e., the partitions
$\pi=\{B_1,\ldots,B_k\}$ of $V(G)$ such that the restriction of $G$ to
each block $B_i$ is connected.

We have $X_\pi \leq X_\sigma$ in $L(\A_G)$ if and only if every block
of $\pi$ is contained in a block of $\sigma$. Hence $L(\A_G)$ is
isomorphic to an induced subposet $L_G$ of $\Pi_n$, the lattice of
partitions of the set $\{1,2,\ldots,n\}$. From the definition of
$L(c(\A_{G,\psi}))$ it is easy to see that $L(\A_G)$ is an interval of
$L(c(\A_{G,\psi}))$, namely, the interval from the bottom element
$\hat{0}$ (the ambient space $K^n$) to the intersection of all the
hyperplanes $x_i=x_j$ of $c(\A_{G,\psi})$. 
For brevity, an element
 $$ X_{\sigma}=(x_1,\ldots,x_{i-1},\alpha_i
      y,x_{i+1},\ldots,x_{j-1},\alpha_i y,x_{j+1},\ldots,x_n,y) $$
($\alpha_i \in \psi(v_i)$ or $\alpha_i \in \psi(v_j)$) of
$L(c(\A_{G,\psi}))$ is written as $\sigma : v_i=v_j=\alpha_i y$, or
more briefly as $\sigma=\{v_iv_j\alpha_i y\}$, and an element
 $$ X_{\delta}=(x_1,\ldots,x_{i-1},0,x_{i+1},\ldots,
  x_{j-1},0,x_{j+1},\ldots,x_n,0) $$
is written as $\delta: v_i=v_j=y=0$, or more briefly as
$\delta=\{v_iv_jy0\}$.  The following sufficient condition for the
supersolvability of a $\psi$-graphical arrangement is stated in
\cite{2} without proof.

\begin{theorem}\label{CH}
Let $(G,\psi)$ be as above. Suppose that we can order the vertices of
$G$ as $v_1,v_2,\ldots,v_n$ such that:

$(1)$ $v_{i+1}$ connects to previous vertices along a clique (so by
Lemma~\ref{AA} below $G$ is chordal).

$(2)$ If $i<j$ and $v_i$ is adjacent to $v_j$, then
$\psi(v_j)\subseteq \psi(v_i)$.

Then $\A_{G,\psi}$ is supersolvable.
\end{theorem}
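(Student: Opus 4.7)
The plan is to prove the theorem by induction on $n$, showing that the flat $M\in L(c(\A_{G,\psi}))$ obtained by intersecting all hyperplanes of $c(\A_{G,\psi})$ that do not involve the coordinate $x_n$ is a modular coatom, and that the interval $[\hat{0},M]$ is isomorphic to $L(c(\A_{G',\psi'}))$, where $G'=G\setminus\{v_n\}$ and $\psi'=\psi|_{V(G')}$.

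First I would observe that $(G',\psi')$ together with the ordering $v_1,\dots,v_{n-1}$ inherits conditions (1) and (2), so by induction $\A_{G',\psi'}$ is supersolvable. Next I would verify that the hyperplanes of $c(\A_{G,\psi})$ containing $M$ are exactly those defining $c(\A_{G',\psi'})$ (viewed in $\mathbb{R}^{n+1}$ with $x_n$ a free coordinate), whence $[\hat{0},M]\cong L(c(\A_{G',\psi'}))$. In the non-degenerate case where $v_n$ contributes at least one new hyperplane, $M$ has rank one less than $\hat{1}$, i.e., $M$ is a coatom.

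The key step is to verify that $M$ is modular. I would use the criterion that, in a geometric lattice, a coatom $M$ is modular iff $F+M$ is a flat for every flat $F$. Describing flats combinatorially by partitions of subsets of $V(G)$ whose blocks are either \emph{free} (connected in $G$) or \emph{pinned} to some value $\alpha y$ (connected in an augmented graph with a virtual vertex attached to each $v_i$ satisfying $\alpha\in\psi(v_i)$), together with the possible extra constraint $y=0$, the operation $F\mapsto F+M$ is seen to simply remove $v_n$ from whatever block contains it, leaving $v_n$ unconstrained. To conclude that the result is again a flat, it suffices to show that $B\setminus\{v_n\}$ remains connected in the relevant augmented graph, where $B$ is the block of $F$ containing $v_n$. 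This is where conditions (1) and (2) combine: (1) says the $G$-neighbors of $v_n$ inside $\{v_1,\dots,v_{n-1}\}$ form a clique in $G$; (2) says that whenever $B$ is labeled $\alpha y$ with $\alpha\in\psi(v_n)$, every such neighbor also has $\alpha\in\psi(v_i)$, so the virtual vertex for $\alpha$ is adjacent to all of them as well. Hence the neighbors of $v_n$ in the augmented graph form a clique, any path through $v_n$ can be shortcut, and $B\setminus\{v_n\}$ remains connected.

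To convert the modular maximal chain of $[\hat{0},M]$ given by induction into one in all of $L(c(\A_{G,\psi}))$, I would invoke the standard lemma that, if $M$ is a modular coatom of a geometric lattice $L$ and $x\in[\hat{0},M]$ is modular in $[\hat{0},M]$, then $x$ is modular in $L$; this follows from the modular identity $M\wedge(x\vee y)=x\vee(M\wedge y)$ available when $x\le M$. Appending $\hat{1}$ to the inductive chain then yields a modular maximal chain in $L(c(\A_{G,\psi}))$, establishing supersolvability. The main obstacle lies in the combinatorial bookkeeping for modularity of $M$: flats of $c(\A_{G,\psi})$ come in several varieties depending on which blocks are free, which are pinned to a common $\alpha y$, and whether $y=0$ is imposed, and one must handle all these cases uniformly through the augmented-graph description and the clique-shortcutting argument.
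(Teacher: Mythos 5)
Your proposal is correct and is structurally the same as the paper's proof: both build the modular maximal chain by peeling off $v_n$, identifying the coatom $M=\pi_n$ (the flat where $x_1=\cdots=x_{n-1}=y=0$, equivalently the center of the subarrangement of hyperplanes not involving $x_n$), and then descending into the interval $[\hat{0},M]\cong L(c(\A_{G',\psi'}))$ via the standard lemma that modularity below a modular element lifts to the whole lattice. The only genuine difference is how modularity of $M$ is certified: the paper verifies the rank identity $\mathrm{rk}(\pi_n)+\mathrm{rk}(\sigma)=\mathrm{rk}(\pi_n\wedge\sigma)+\mathrm{rk}(\pi_n\vee\sigma)$ by a case analysis on the type of block containing $v_n$ (free, containing $y=0$, or pinned to some $\alpha y$), each time exhibiting $\pi_n\wedge\sigma$ as ``split $v_n$ off into a singleton block''; you instead invoke the criterion that a flat $X$ of a central arrangement is modular iff $X+Y\in L(\A)$ for all flats $Y$, and check that $F+M$ (which frees the coordinate $x_n$) is again a flat via the augmented-graph clique-shortcutting argument. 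These are two packagings of the same combinatorial fact — conditions (1) and (2) make the neighbors of $v_n$ in the augmented graph a clique — so neither buys more generality, though your criterion handles all block types uniformly where the paper enumerates them. Two small points to tidy up: you should justify that $M$ is in fact a coatom (that all hyperplanes through $v_n$ restrict to a \emph{single} hyperplane of $M$ is itself a consequence of (1) and (2)), and the degenerate case where $v_n$ contributes no hyperplane should be dispatched explicitly by noting $L(c(\A_{G,\psi}))=L(c(\A_{G',\psi'}))$ there; you flag both, and the paper is no more careful on either.
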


\begin{proof}
To prove that $\A_{G,\psi}$ is supersolvable we need to find a modular
maximal chain in $L(c(\A_{G,\psi}))$. 
We will show that a modular maximal chain is given by $\hat{0} < \pi_1
< \cdots < \pi_n < \hat{1}$, where $\pi_i=\{v_1v_2\cdots
v_{i-1}y0\}$. 
First we prove that $\pi_n=\{v_1v_2\cdots v_{n-1}y0\}$ is a modular
element. 
For any $\sigma=\{B_1,B_2,\ldots, B_t\} \in L(c(\A_{G,\psi}))$, we only
need to consider the block $B_i$ which contains $v_n$. 
If $B_i=\{v_n\}$ then $\sigma < \pi_n$. Hence
$\text{rk}(\pi_n)+\text{rk}(\sigma)=\text{rk}(\pi_n \wedge \sigma)+
\text{rk}(\pi_n \vee \sigma)$. 

If $B_i=\{v_{i_1}\cdots v_{i_m}v_n\}$ then $\pi_n \vee \sigma =
\hat{1}$. Since $v_{n}$ connects to previous vertices along a clique,
the block $B_i'=\{v_{i_1}\cdots v_{i_m}\}$ exists. Then 
$\pi_n\wedge \sigma = \{B_1,\ldots,B_{i-1},B_{i+1},$ $\ldots,
B_t,B_i',v_n\}$. 
Hence $\text{rk}(\pi_n \wedge \sigma )= \text{rk}(\sigma)-1$ and
$\text{rk}(\pi_n)+\text{rk}(\sigma)=\text{rk}(\pi_n \wedge \sigma)+
\text{rk}(\pi_n \vee \sigma)$. 

If $B_i=\{v_{i_1}\cdots v_{i_m}v_ny0\}$ then $\pi_n \vee \sigma =
\hat{1}$ and   
$$\pi_n\wedge \sigma = \{B_1,\ldots,B_{i-1},B_{i+1},\dots,
B_t,v_{i_1}\cdots v_{i_m}y0,v_n\}.$$  
Hence $\text{rk}(\pi_n \wedge \sigma )= \text{rk}(\sigma)-1$ and
$\text{rk}(\pi_n)+\text{rk}(\sigma)=\text{rk}(\pi_n \wedge \sigma)+
\text{rk}(\pi_n \vee \sigma)$. 

If $B_i=\{v_{i_1}\cdots v_{i_m}v_n\alpha_j y\}$ $(\alpha_j \in
\psi(v_{i_j}), 1\leq j\leq m,$ or $\alpha_j \in \psi(v_{n}))$ then
$\pi_n\vee \sigma = \hat{1}$. Since  $\psi(v_n)\subseteq
\psi(v_{i_j})$ if $v_{i_j}v_n\in E$, i.e., if $\alpha_j \in
\psi(v_{n})$, we have $\alpha_j \in \psi(v_{i_j})$. 
 Hence the block $B_i'=\{v_{i_1}\cdots v_{i_m}\alpha_j y\}$ exists and
$\pi_n \wedge \sigma = \{B_1,\dots,B_{i-1},B_{i+1},\dots,
 B_t,B_i',v_n\}$. Then $\text{rk}(\pi_n \wedge \sigma )=
 \text{rk}(\sigma)-1$ and
 $\text{rk}(\pi_n)+\text{rk}(\sigma)=\text{rk}(\pi_n \wedge \sigma)+
 \text{rk}(\pi_n \vee \sigma)$. 

Hence we get that $\pi_n=\{v_1v_2\cdots v_{n-1}y0\}$ is a modular
element.  Now if $\pi_{n-1}=\{v_1v_2\cdots v_{n-2}y0\}$ is modular in
the interval $[\hat{0}, \pi_n]$, then it is modular in
$L(c(\A_{G,\psi}))$ \cite[Prop. 4.10(b)]{1}.  Therefore we just need to
show that $\pi_{n-1}$ is modular in the interval $[\hat{0}, \pi_n]$.

Since all elements $\sigma$ in $[\hat{0},\pi_n]$ must satisfy that
$\sigma$ has a block $B_i=\{v_n\}$, we can ignore the block
$B_i=\{v_n\}$. In the same way we can get that
$\pi_{n-1}=\{v_1v_2\cdots v_{n-2}y0\}$ is a modular element in the
interval $[\hat{0}, \pi_n]$. Continuing the procedure, we get the
modular maximal chain $\hat{0} < \pi_1 < \cdots < \pi_n < \hat{1}$.
\end{proof}

Our main result is the converse to Theorem 1.
\begin{theorem}\label{CHS}
The sufficient condition in Theorem \ref{CH} for the supersolvability
of $\A_{G,\psi}$ is also necessary. 
\end{theorem}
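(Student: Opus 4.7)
My plan is induction on $n = |V(G)|$ (the $n=1$ case being trivial). For the inductive step, assume $\A_{G,\psi}$ is supersolvable and seek a vertex $v_n$ such that $N(v_n)$ is a clique in $G$ and $\psi(v_n)\subseteq\psi(w)$ for every $w\in N(v_n)$; then reduce to $(G-v_n,\psi|_{V\setminus v_n})$ and invoke the inductive hypothesis.

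To produce $v_n$, I argue that $L(c(\A_{G,\psi}))$ admits a modular coatom of the form $\pi(v):=\{(V\setminus v)\cup\{y,0\},\{v\}\}$, mirroring the coatom of the modular chain constructed in the proof of Theorem~\ref{CH}; take $v_n$ to be this $v$. Both required conditions then fall out of two rank computations using the modular identity $\mathrm{rk}(\pi(v_n))+\mathrm{rk}(\sigma)=\mathrm{rk}(\pi(v_n)\vee\sigma)+\mathrm{rk}(\pi(v_n)\wedge\sigma)$ applied to well-chosen test flats. For condition~(1), given $v_i,v_j\in N(v_n)$, take $\sigma$ to be the rank-two flat whose only non-trivial block is $\{v_i,v_j,v_n\}$ (valid since $v_iv_n,v_jv_n\in E$). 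Then $\pi(v_n)\vee\sigma=\hat 1$ of rank $N$, so modularity forces $\mathrm{rk}(\pi(v_n)\wedge\sigma)=1$. Since the meet is the intersection of all hyperplanes of $c(\A_{G,\psi})$ that contain both $\pi(v_n)$ and $\sigma$, a direct enumeration shows the only candidate is $x_i=x_j$, which must therefore lie in the arrangement, giving $v_iv_j\in E$. For condition~(2), given $w\in N(v_n)$ and $\alpha\in\psi(v_n)$, apply the same scheme with $\sigma$ the rank-two flat whose non-trivial block is $\{v_n,w,\alpha y\}$; the meet has rank one only when the hyperplane $x_w=\alpha y$ lies in $\A_{G,\psi}$, so $\alpha\in\psi(w)$.

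For the inductive reduction, the atoms of $L(c(\A_{G,\psi}))$ lying below $\pi(v_n)$ are precisely the hyperplanes not involving $v_n$, so the interval $[\hat 0,\pi(v_n)]$ is naturally isomorphic to $L(c(\A_{G-v_n,\psi|_{V\setminus v_n}}))$, and a modular maximal chain of $L(c(\A_{G,\psi}))$ ending at $\pi(v_n)$ restricts to a modular maximal chain there, giving supersolvability of the smaller arrangement. The main obstacle lies in the second paragraph: a supersolvable lattice can in principle possess modular coatoms of several shapes (splitting two or more vertices off together, or separating $y$ from $0$ into distinct blocks), so one must show that at least one modular coatom of the special form $\pi(v)$ always exists whenever $L(c(\A_{G,\psi}))$ is supersolvable—either by rearranging an arbitrary modular chain so that $\pi_{N-1}$ acquires this shape, or by a direct existence argument. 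Once this normalization is in place, the rank computations above yield the desired ordering directly.
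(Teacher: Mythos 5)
Your overall strategy is the same as the paper's, just run forwards instead of in the contrapositive: the paper also reduces everything to the question of which coatoms of $L(c(\A_{G,\psi}))$ can be modular, and your two test-flat computations (the flat $\{v_iv_jv_n\}$ to force $v_iv_j\in E$, and the flat $\{v_nw\,\alpha y\}$ to force $\alpha\in\psi(w)$) are essentially identical to the flats $\gamma$ the paper uses to show that $\sigma_i=\{v_{i_1}\cdots v_{i_{n-1}}y0\}$ is not modular when the split-off vertex is not simplicial or violates the $\psi$-inclusion. The localization $[\hat 0,\pi(v_n)]\cong L(c(\A_{G-v_n,\psi'}))$ and the appeal to heredity of supersolvability under intervals are also fine.

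However, the step you flag as ``the main obstacle'' is a genuine gap, and it is exactly where the substance of the paper's proof lies. You need to know that a supersolvable $L(c(\A_{G,\psi}))$ has a modular coatom of the special shape $\pi(v)$, and this does not follow from anything you have written. The paper establishes it by a case analysis of all coatoms: a coatom with three or more blocks is never modular, and a coatom with two blocks each of size greater than one is never modular (each killed by an explicit $\gamma$ with $\mathrm{rk}(\sigma\wedge\gamma)<n-2$). This leaves only two shapes: your $\pi(v)$ (the paper's $\sigma_i$) \emph{and} the coatoms $\delta_i=\{v_1v_2\cdots v_n\,\alpha_i y\}$ in which all vertices are glued to a common value $\alpha_i y$. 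The $\delta_i$ are a real threat to your induction: they can genuinely be modular (e.g.\ when some $\alpha$ lies in every $\psi(v_i)$), so a modular maximal chain may well terminate in a $\delta_i$ rather than in any $\pi(v)$, and then your ``rearrange the chain'' normalization has nothing to rearrange to without further argument. The paper sidesteps this by arguing contrapositively: assuming the elimination procedure gets stuck, it invokes Dirac's theorem to produce two nonadjacent simplicial vertices $v_s,v_t$ violating the $\psi$-condition and uses them to build flats $\gamma$ showing that every $\delta_i$ is also non-modular, so that \emph{no} coatom is modular and no modular maximal chain exists. Until you either carry out this coatom classification (including the elimination of the $\delta_i$ case) or supply some other existence argument for a modular coatom of the form $\pi(v)$, the proof is incomplete.
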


 Before we prove Theorem 2, the following two results of Dirac \cite{7}
 are required. A vertex is \emph{simplicial} in a graph if its neighbors
 form a complete subgraph.  A graph is \emph{recursively simplicial}
 if it consists of a single vertex, or if 
 it contains a simplicial vertex $v$ and when $v$ is removed the
 subgraph that remains is recursively simplicial. It is well-known and
 easy to see that if $G$ is recursively simplicial and $v$ is
 \emph{any} vertex, then $G-v$ is recursively simplicial

\begin{lemma} \label{AA}
$G$ is chordal if and only if $G$ is recursively simplicial.
\end{lemma}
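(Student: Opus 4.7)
The plan is to prove the two directions of the biconditional separately by induction on $|V(G)|$, using that both properties pass to induced subgraphs. For the easy direction (recursively simplicial implies chordal), let $v$ be a simplicial vertex of $G$ such that $G-v$ is recursively simplicial; by induction $G-v$ is chordal. If $G$ contained an induced cycle $C$ of length at least four not through $v$, it would survive in $G-v$, a contradiction. If $v$ instead lies on such a cycle, its two neighbors on $C$ must be adjacent because $v$ is simplicial, giving a chord of $C$, again a contradiction. Hence $G$ is chordal.

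For the nontrivial direction (chordal implies recursively simplicial), it suffices to exhibit a simplicial vertex in any chordal graph with at least one vertex, since induced subgraphs of chordal graphs are chordal and the recursion then unwinds. I would prove the slightly stronger statement that every non-complete chordal graph on at least two vertices possesses two non-adjacent simplicial vertices, because this is what allows the induction to close. Fix non-adjacent $a, b \in V(G)$ and let $S$ be a minimal $a$-$b$ separator, with $A$ and $B$ the components of $G-S$ containing $a$ and $b$ respectively. The key structural fact is that $S$ is a clique: if $u, u' \in S$ were non-adjacent, then minimality of $S$ forces each to have a neighbor in both $A$ and $B$, so concatenating a shortest $u$-$u'$ path through $A$ with one through $B$ produces an induced cycle of length at least four, contradicting chordality. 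Now consider the proper induced subgraph $G[A \cup S]$ (proper because $b \notin A \cup S$): either it is complete, in which case any vertex of $A$ is simplicial in $G$ since its $G$-neighborhood coincides with its $G[A \cup S]$-neighborhood; or by the inductive hypothesis it contains two non-adjacent simplicial vertices, at least one of which must lie in $A$ because $S$ is a clique, and such a vertex is likewise simplicial in $G$. The symmetric argument on $G[B \cup S]$ produces a simplicial vertex in $B$; these two vertices are non-adjacent in $G$ because they lie in distinct components of $G-S$.

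The main obstacle is the clique property of a minimal separator and the bookkeeping required to pull a simplicial vertex of $G[A \cup S]$ back into $G$ while preserving its simpliciality; the latter is precisely what forces the inductive hypothesis to be strengthened from \emph{has a simplicial vertex} to \emph{has two non-adjacent simplicial vertices}, without which the argument would stall whenever the simplicial vertex produced by induction happened to lie in $S$.
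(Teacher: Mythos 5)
Your proof is correct; the paper offers no proof of this lemma at all, citing it to Dirac, and your argument is precisely the classical one (a minimal separator between two non-adjacent vertices is a clique, then induct on the two sides $G[A\cup S]$ and $G[B\cup S]$). Note that the strengthened inductive statement you use --- every non-complete chordal graph has two non-adjacent simplicial vertices --- is exactly the paper's Lemma 4, so your argument establishes both of the cited results of Dirac simultaneously.
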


\begin{lemma} \label{AB}
Every chordal graph $G$ that is not a complete graph has at least two
non-adjacent simplicial vertices.
\end{lemma}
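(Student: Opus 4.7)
The plan is to prove Dirac's theorem by strong induction on $|V(G)|$. The base cases $|V(G)|\le 2$ are immediate: a non-complete graph on at most two vertices consists of isolated vertices, each of which is simplicial. For the inductive step, since $G$ is not complete I would pick two non-adjacent vertices $a,b\in V(G)$ and let $S\subseteq V(G)\setminus\{a,b\}$ be a minimal $a$--$b$ vertex separator; the strategy is then to apply the inductive hypothesis to the two smaller chordal subgraphs obtained by attaching $S$ to each side of the cut.

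The main obstacle is a sub-lemma asserting that $S$ induces a clique in $G$. I would prove it by contradiction: supposing two vertices $s_1,s_2\in S$ were non-adjacent, minimality of $S$ guarantees, for each $i\in\{1,2\}$, an $a$--$b$ path in $G$ avoiding $S\setminus\{s_i\}$ and hence passing through $s_i$. Writing $C_a,C_b$ for the vertex sets of the components of $G-S$ containing $a$ and $b$, the ``$a$-portion'' of each such path lies in $C_a$ and the ``$b$-portion'' in $C_b$; from them I would extract shortest paths $Q_a$ and $Q_b$ from $s_1$ to $s_2$ with internal vertices in $C_a$ and $C_b$, respectively. Concatenating $Q_a$ and $Q_b$ gives a cycle of length at least $4$, and the shortest-path choice inside each side together with the fact that no edge of $G-S$ joins $C_a$ to $C_b$ forces this cycle to be chordless, contradicting chordality of $G$.

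With the clique property of $S$ in hand, the inductive step proceeds as follows. Consider the chordal subgraph $H_a:=G[C_a\cup S]$, which is strictly smaller than $G$ since $b\notin H_a$. If $H_a$ happens to be complete, then $a$ is simplicial in $H_a$; by the separation property $a$'s $G$-neighborhood lies inside $C_a\cup S$, hence inside a clique, so $a$ is simplicial in $G$. Otherwise the inductive hypothesis yields two non-adjacent simplicial vertices of $H_a$, and since $S$ is a clique they cannot both lie in $S$, so at least one of them---call it $v_a$---lies in $C_a$; as its $G$-neighborhood coincides with its $H_a$-neighborhood, $v_a$ is simplicial in $G$. By the symmetric argument I obtain $v_b\in C_b$ simplicial in $G$. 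Finally, $v_a$ and $v_b$ lie in distinct components of $G-S$, so no edge of $G$ joins them, and they are the desired pair of non-adjacent simplicial vertices.
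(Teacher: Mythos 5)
Your proof is correct: the minimal separator $S$ is indeed a clique by the chordless-cycle argument, and the induction on the two sides $G[C_a\cup S]$ and $G[C_b\cup S]$ goes through exactly as you describe. The paper itself gives no proof of this lemma --- it is quoted as a classical result of Dirac \cite{7} --- and your argument is precisely the standard minimal-vertex-separator proof of that theorem, so there is nothing to reconcile with the paper's text.
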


\begin{proof}[of Theorem \ref{CHS}]
Condition (1) is easy to check, because $L(\A_{G})$ is an interval of
$L(c(\A_{G,\psi}))$.  Since intervals of supersolvable lattices are
supersolvable (\cite[Prop. 3.2]{3}), we have that $L(c(\A_G))$ is
supersolvable. Hence by \cite[Prop. 2.8]{3} $G$ is chordal.

By Lemma \ref{AB} we know that there are at least two nonadjacent
simplicial vertices in the chordal graph $G$. Suppose that there is a
simplicial vertex, say $v_{i_n}$, which satisfies the following
condition: 
 \begin{equation} \psi(v_{i_n}) \subseteq \psi(v_{i_j}) \text{ for all }
v_{i_j}v_{i_n} \in E. \label{star} \end{equation}
 Then we label  $v_{i_n}$ as $v_n$ and remove this vertex. By
 Lemma \ref{AA} we know that the remaining graph is still recursively
 simplicial. Continuing in this way, suppose that there is a
 simplicial vertex, which we label as $v_{n-1}$ and then remove it. 
 Continue this procedure. If condition (2) is not necessary then
 that means there exists one step $m$ in the above procedure such that
 all remaining simplicial vertices do not satisfy the condition
 \eqref{star}. Then we will show that there is no modular maximal
 chain in $L(c(\A_{G,\psi}))$. 
 
Next, we show that among all the coatoms only $\sigma_i
=\{v_{i_1}v_{i_2}\cdots v_{i_{n-1}}y0\}$ and $\delta_i =
\{v_1v_2\cdots v_n\alpha_i y\}$, $\alpha_i \in \psi(v_i), 1\leq i\leq
n$ could be modular elements of $L(c(\A_{G,\psi}))$. We claim that a
coatom is not modular if it has more than two blocks or it has two
blocks but the cardinalities of both of the blocks are greater than
$1$.

First, it is easy to check that any coatom $\sigma$ is not modular if
it has more than two blocks. Suppose $\sigma=\{A,B,C\}$ is a
coatom. Since $\text{rk}(\sigma)= n-1$, i.e., $\dim(\sigma)=1$,
$A,B$ and $C$ can only be $\{v_{i_1}v_{i_2}\cdots v_{i_{j_i}}\alpha_i
y\}$ where $i=1,2,3$ and $\alpha_i \in \psi(v_{i_m}),
m=1,2,\ldots,j_i$. Then $\gamma = \{v_1v_2\cdots v_n,y0\}$,
$\text{rk}(\sigma)=\text{rk}(\gamma)=n-1$, $\text{rk}(\sigma \vee
\gamma)= n$ but $\text{rk}(\sigma \wedge \gamma)< n-2.$ Hence $\sigma$
is not modular.

Moreover if $\sigma = \{A,B\}$ is a coatom such that $ \vert A
\vert>1$ and $ \vert B\vert>1$ then $\sigma$ is also not
modular. Without loss of generality assume that there exist $u, v \in
A$ and $u',v' \in B$ such that $u\neq v'$ and $u'\neq v$, $uu' \in
E(G)$, and $vv' \in E(G)$. Let $\gamma = \{(A\cup u')\backslash v, (B\cup
v)\backslash u'\}$. Then $\text{rk}(\sigma)=\text{rk}(\gamma)=n-1,
\text{rk}(\sigma \vee \gamma)= n$ but $\text{rk}(\sigma \wedge
\gamma)< n-2.$ Hence $\sigma$ is not modular.

Therefore, among all coatoms, only $\sigma_i =\{v_{i_1}v_{i_2}\cdots
v_{i_{n-1}}y0\}$ and 
  $$ \delta_i = \{v_1v_2\cdots v_n \alpha_i y,
           \alpha_i \in \psi(v_i),1\leq i\leq n\} $$ 
could be modular elements. Similarly, among all the elements which
$\sigma_i$ covers, only $\{(v_{i_1}v_{i_2}\cdots
v_{i_{n-1}}\backslash v_{i_j})y0\}$ could be modular.

If $v_{i_n}$ is not a simplicial vertex then we show
that $\sigma_i$ is not modular. Without loss of generality assume
$v_{i_s}v_{i_n} \in E$ and $v_{i_t}v_{i_n} \in E$ but
$v_{i_s}v_{i_t}\notin E$. 
Let $\gamma=\{ (v_{i_1}v_{i_2}\ldots v_{i_{n-1}}\backslash v_{i_s}v_{i_t})y0,
v_{i_s}v_{i_t}v_{i_n}\} $. Then
$\text{rk}(\sigma)=\text{rk}(\gamma)=n-1, \text{rk}(\sigma \vee
\gamma)= n$ but  
$\text{rk}(\sigma \wedge \gamma)< n-2.$ Hence $\sigma_i$ is not
modular if $v_{i_n}$ is not a simplicial vertex. 

 We now show that if $v_{i_n}$ is a simplicial vertex but does not
 satisfy condition \eqref{star}, then $\sigma_i$ is not modular.  Without
 loss of generality assume that $\alpha_i \in \psi(v_{i_n})$ but
 $\alpha_i \notin \psi(v_{i_j})$ for $v_{i_j}v_{i_n} \in E$. Then
 $\gamma =\{ v_{i_j}v_{i_n}\alpha_i y\}$, $\text{rk}(\sigma_i)=n-1,
 \text{rk}(\gamma)=2, \text{rk}(\sigma_i \vee \gamma)= n$ but
 $\text{rk}(\sigma_i \wedge \gamma)=0.$ From the above discussion, if
 condition (2) is not necessary then there exists one step $m$ such
 that all remaining simplicial vertices do not satisfy condition
 \eqref{star}. That means that all $\{v_{i_1}v_{i_2}\cdots
 v_{i_{n-m}}y0\}$ are not modular elements. Hence there is no modular
 maximal chain from $\hat{0}$ to $\sigma_i$.

We now show that if $\delta_i$ is modular then $\alpha_i \in
\psi(v_i)$ for all $i \in [n]$.  Equivalently, we show that if there
exists some $v_m$ such that $\alpha_i \notin \psi(v_m)$, then
$\delta_i$ is not modular.  Let $\gamma = \{v_1v_2\cdots v_n\backslash v_m, v_m
y0\}$. Hence $\text{rk}(\delta_i)=\text{rk}(\gamma)=n-1,
\text{rk}(\delta_i\vee \gamma)= n$ but $\text{rk}(\delta_i \wedge
\gamma)< n-2.$ From the above discussion, if condition (2) is not
necessary then there are at least two nonadjacent simplicial vertices,
say $v_s$ and $v_t$, which do not satisfy condition \eqref{star}. It
means that there exist $\alpha_s \in \psi(v_s), \alpha_t \in
\psi(v_t)$ and $\alpha_s,\alpha_t \neq \alpha_i$. If
$\alpha_s=\alpha_t$ then let $ \gamma = \{(v_1v_2\cdots
v_n\backslash v_{s}v_{t}) \alpha_i y, v_{s} v_{t} \alpha_s y\}$.  Hence
$\text{rk}(\delta_i)=\text{rk}(\gamma)=n-1, \text{rk}(\delta_i \vee
\gamma)= n$ but $\text{rk}(\delta \wedge \gamma)< n-2$, so $\delta_i$
is not modular.

If $\alpha_s \neq \alpha_t$ then  let $\gamma=\{v_t\alpha_t y,v_s
\alpha_s y, (v_1v_2\cdots v_n\backslash {v_t v_s}) \alpha_i y\}$. 
Now $\text{rk}(\delta_i)=\text{rk}(\gamma)=n-1, \text{rk}(\delta_i
\vee \gamma)= n$ but $\text{rk}(\delta_i \wedge \gamma)< n-2$, so
$\delta_i$ is not modular. 

Therefore if there does not exist a labeling such that conditions (1)
and (2) holds, then we can't find a modular maximal chain in
$L(c(\A_G))$. Hence the proof is complete. 
\end{proof}

We call $v_1,\ldots,v_n$ a \emph{vertex elimination order} for $G$ if 
$v_{i+1}$ connects to previous vertices along a clique. For any
supersolvable arrangement $\A$ of rank $n$ the characteristic
polynomial of $\A$ 
(defined, e.g., in \cite[{\S}1.3]{1} or \cite[{\S}3.11.2]{4}) factors
as $\chi_G(q) = \prod_{i=1}^n(q-a_i)$, where $a_1,\dots,a_n$ are
nonnegative integers, called the \emph{exponents} of $\A$. 
There is a simple combinatorical interpretation of the exponents of
$\A(G)$ when $G$ is chordal.  
\begin{proposition}\cite[Lemma 3.4]{9}
Let $G$ be a chordal graph with vertex elimination order
$\{v_1,\ldots,v_n\}$. For $1\leqslant i\leqslant n$ let $b_i$ be the
degree of $v_i$ in the graph $G-\{v_n,\ldots,v_{i+1}\}$. Then
$\{b_1,\ldots,b_{n}\}$ are the exponents of the supersolvable
arrangement $\A(G)$. 
\end{proposition}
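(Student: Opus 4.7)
The plan is to compute the chromatic polynomial $P_G(q)$ directly from the vertex elimination order, and then apply Whitney's classical identity $\chi_G(q)=P_G(q)$ (i.e., the characteristic polynomial of the graphical arrangement coincides with the chromatic polynomial of the graph) to read off the exponents. Because $G$ is chordal, $\A(G)$ is supersolvable and $\chi_G(q)$ is known a priori to split over $\mathbb{Z}_{\geq 0}$, so any explicit factorization with nonnegative integer roots determines the multiset of exponents uniquely.

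I would count proper $q$-colorings by coloring the vertices in the order $v_1,v_2,\ldots,v_n$. By the defining property of the elimination order, the neighbors of $v_i$ lying in $\{v_1,\ldots,v_{i-1}\}$ form a clique of size exactly $b_i$, since $b_i$ is the degree of $v_i$ in $G[v_1,\ldots,v_i]=G-\{v_n,\ldots,v_{i+1}\}$. In any proper coloring of the induced subgraph $G[v_1,\ldots,v_{i-1}]$, those $b_i$ earlier neighbors carry $b_i$ pairwise distinct colors, so exactly $q-b_i$ legal choices remain for the color of $v_i$. Multiplying these independent counts over $i=1,\ldots,n$ gives
\[
P_G(q)=\prod_{i=1}^{n}(q-b_i),
\]
and hence $\chi_G(q)=\prod_{i=1}^{n}(q-b_i)$ as well, identifying $\{b_1,\ldots,b_n\}$ as the exponents of $\A(G)$.

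I do not expect a real obstacle here: the entire argument reduces to a one-step greedy coloring computation enabled by the perfect-elimination property of a chordal vertex order. The only minor point to verify is the normalization in Whitney's identity; in particular, the factor $q^{c(G)}$ that must divide $P_G(q)$ (where $c(G)$ is the number of connected components of $G$) is accounted for precisely by those indices $i$ with $b_i=0$, namely the first vertex of each component in the elimination order.
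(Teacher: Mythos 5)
Your argument is correct. Note that the paper itself gives no proof of this proposition; it is quoted from Edelman--Reiner \cite[Lemma 3.4]{9}, where it is obtained essentially via the supersolvable factorization machinery (reading off, along the modular maximal chain of connected partitions $\{v_1,\ldots,v_i\}$, how many new atoms appear at each step). Your route is the more elementary one: use Whitney's identity $\chi_{\A_G}(q)=P_G(q)$ and compute $P_G(q)$ by greedy coloring along the elimination order. The key step --- that the earlier neighbors of $v_i$ form a clique of size $b_i$ and hence occupy $b_i$ distinct colors in any proper coloring of $G[v_1,\ldots,v_{i-1}]$, leaving exactly $q-b_i$ choices --- is exactly where the perfect-elimination property is used, and the telescoping product $P_{G[v_1,\ldots,v_i]}(q)=(q-b_i)\,P_{G[v_1,\ldots,v_{i-1}]}(q)$ is valid. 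Your closing remark on normalization is also sound: since the earlier neighbors of each $v_i$ form a clique they lie in a single component of $G[v_1,\ldots,v_{i-1}]$, so components are never merged as vertices are added, and the indices with $b_i=0$ are in bijection with the components of $G$, accounting for the factor $q^{c(G)}$. The only thing you should make explicit is the uniqueness of the multiset of roots of a monic polynomial over $\mathbb{Z}$, which you implicitly invoke to identify the $b_i$ with the exponents; this is immediate, so there is no gap.
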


It is not hard to get a similar property for the supersolvable
arrangement $\A_{G,\psi}$. We omit the proof of this proposition. 

\begin{proposition}
Let $(G,\psi)$ be a chordal graph with vertex elimination order
$\{v_1,\ldots,v_n\}$. Assume that for any $v_iv_j\in E(G)$ such that
if $i<j$, we have $\psi(v_j)\subseteq \psi(v_i)$. For $1\leqslant
i\leqslant n$ let $b_i$ be the sum of $\vert \psi(v_i)\vert$ and the
degree of $v_i$ in the graph $G-\{v_n,\ldots,v_{i+1}\}$. Then
$\{b_1,\ldots,b_{n}\}$ are the exponents of the supersolavable
arrangement $\A_{G,\psi}$.
\end{proposition}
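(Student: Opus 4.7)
The strategy is to apply the standard supersolvable factorization theorem to the cone $c(\A_{G,\psi})$, using the modular maximal chain $\hat{0} < \pi_1 < \cdots < \pi_n < \hat{1}$ with $\pi_i = \{v_1 \cdots v_{i-1} y 0\}$ already constructed in the proof of Theorem~\ref{CH}, and then decone. The hypotheses of the proposition are exactly conditions (1) and (2) of Theorem~\ref{CH}, so that chain is available.

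First I would invoke Stanley's supersolvable factorization theorem (e.g.\ \cite[Thm.~4.13]{3} or \cite[Thm.~4.12]{1}): for a supersolvable geometric lattice $L$ with modular maximal chain $\hat{0} = x_0 \lessdot x_1 \lessdot \cdots \lessdot x_r = \hat{1}$,
$$\chi_L(q) \;=\; \prod_{i=1}^{r}(q - e_i), \qquad e_i \;=\; \#\{\text{atoms } a \in L : a \leq x_i,\ a \not\leq x_{i-1}\}.$$
Combined with the deconing identity $\chi_{\A_{G,\psi}}(q) = \chi_{c(\A_{G,\psi})}(q)/(q-1)$, the task is reduced to a combinatorial count of the $e_i$. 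Next I would carry out this count. The subspace $\pi_{i+1}$ is cut out by $x_1 = \cdots = x_i = y = 0$, so a hyperplane of $c(\A_{G,\psi})$ contains $\pi_{i+1}$ iff it is $y=0$, an edge hyperplane $x_j = x_k$ with $j,k \leq i$, or a $\psi$-hyperplane $x_j = \alpha y$ with $j \leq i$ and $\alpha \in \psi(v_j)$. Subtracting the analogous list for $\pi_i$ (the same description with the index bound dropped to $i-1$), the atoms below $\pi_{i+1}$ but not below $\pi_i$ are precisely the edges joining $v_i$ to some $v_j$ with $j < i$, together with the $|\psi(v_i)|$ hyperplanes $x_i = \alpha y$. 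The number of such edges equals the number of neighbors of $v_i$ inside $\{v_1, \ldots, v_{i-1}\}$, which is exactly the degree of $v_i$ in $G - \{v_n, \ldots, v_{i+1}\}$, so $e_{i+1} = b_i$ for $1 \leq i \leq n$. Meanwhile $e_1 = 1$ because $y=0$ is the unique atom below $\pi_1 = \{y0\}$; this contributes the factor $q-1$ absorbed by deconing, and we obtain $\chi_{\A_{G,\psi}}(q) = \prod_{i=1}^{n}(q-b_i)$.

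The main (and only) obstacle is bookkeeping: one must keep straight the index shift $e_{i+1} = b_i$ arising from the fact that the modular chain has length $n+1$ while the list of exponents has length $n$, and one must translate ``degree of $v_i$ in $G - \{v_n, \ldots, v_{i+1}\}$'' into ``number of neighbors of $v_i$ with smaller index in the elimination order,'' which is immediate from the definition of the deleted subgraph. Neither step poses any genuine difficulty, which is presumably why the authors omit the proof.
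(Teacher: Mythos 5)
Your proof is correct and follows what is surely the intended argument (the paper itself omits the proof, pointing only to the analogous Edelman--Reiner computation for $\A_G$): apply the supersolvable factorization theorem to the modular chain $\hat{0}<\pi_1<\cdots<\pi_n<\hat{1}$ already built in the proof of Theorem~\ref{CH}, count atoms, and decone. Your bookkeeping is right: the atoms below $\pi_{i+1}$ but not $\pi_i$ are exactly the edges from $v_i$ to earlier vertices plus the $|\psi(v_i)|$ hyperplanes $x_i=\alpha y$, giving $e_{i+1}=b_i$, while $e_1=1$ supplies the factor $q-1$ removed by deconing.
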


There is another conjecture in \cite{2}. It is well known that every
supersolvable arrangement is free (in the sense of Terao \cite[{\S}6.3]{6})
and every free graphical arrangement is supersolvable. Thus the second
author proposed the following conjecture.

\begin{conjecture}\label{b} 
If $\A_{G,\psi}$ is a free $\psi$-graphical arrangement, then
$\A_{G,\psi}$ is supersolvable. 
\end{conjecture}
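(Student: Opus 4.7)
The plan is to prove the contrapositive of Conjecture~\ref{b}: if $\A_{G,\psi}$ is not supersolvable, then it is not free. I would proceed by induction on $|V(G)|$, using Terao's addition-deletion theorem to translate each combinatorial obstruction surfaced in the proof of Theorem~\ref{CHS} into an algebraic obstruction to freeness.

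The first step is to reduce to the case that $G$ is chordal. Applying addition-deletion iteratively to each affine hyperplane $x_i = \alpha_j y$, the restrictions should themselves be $\psi$-graphical cone arrangements on $G$ with smaller $\psi$, which are handled by induction; this forces freeness of the pure graphical arrangement $\A_G$, and by the known characterization for graphical arrangements this implies $G$ is chordal. An equivalent route is to localize $L(c(\A_{G,\psi}))$ at the intersection of all hyperplanes $x_i=x_j$ and invoke the fact that freeness descends under localization.

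Next, assuming $G$ is chordal but the nesting condition~(2) of Theorem~\ref{CH} fails for every vertex elimination order, I would peel off \emph{good} simplicial vertices (those satisfying $\psi(v) \subseteq \psi(w)$ for all neighbors $w$) via addition-deletion, which preserves freeness. By Theorem~\ref{CHS}, this process must eventually stall at a configuration $(G',\psi')$ where every simplicial vertex $v$ of $G'$ admits a neighbor $w$ and a scalar $\alpha \in \psi'(v) \setminus \psi'(w)$. The task reduces to showing that this stalled $\A_{G',\psi'}$ cannot be free.

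The main obstacle is exhibiting an algebraic obstruction to freeness at such a stuck configuration. One strategy is to single out the hyperplane $H\colon x_v = \alpha y$ and examine the restriction $(c(\A_{G',\psi'}))^H$ and the deletion $c(\A_{G',\psi'}) \setminus H$: if $\A_{G',\psi'}$ were free, addition-deletion would force both to be free with prescribed exponents, and one would aim to contradict this via either (i) an inductively recognized non-free subarrangement in the restriction, or (ii) a direct computation showing the characteristic polynomial of $\A_{G',\psi'}$ does not factor into nonnegative integer roots, contradicting Terao's factorization theorem. This step is genuinely difficult because freeness is not known to depend only on the intersection lattice (Terao's conjecture remains open), so the non-modularity witnesses from the proof of Theorem~\ref{CHS} do not mechanically translate into failure of freeness; one likely has to exploit structure specific to $\psi$-graphical arrangements, either by constructing explicit logarithmic derivations that fail to lift across $H$, or via a finite-field point-count on the complement that exposes an integer root obstruction.
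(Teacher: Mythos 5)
The statement you are asked to prove is Conjecture~\ref{b}, which the paper itself leaves open: the authors say explicitly that they are unable to prove it, and they only record the strictly weaker Theorem~\ref{THI} (itself stated without proof, via localization and Yoshinaga's criterion for free 3-arrangements). Your proposal is accordingly a plan rather than a proof, and it has a genuine, unfilled gap exactly where you flag it: you never establish that the ``stalled'' configuration $(G',\psi')$ fails to be free. Since freeness is not known to be a lattice invariant (Terao's conjecture is open), the non-modularity witnesses from the proof of Theorem~\ref{CHS} give you nothing algebraic for free, and neither of your suggested fallbacks is carried out --- you do not exhibit a non-free localization, a characteristic polynomial that fails to factor over the nonnegative integers, or an explicit obstruction in the module of logarithmic derivations. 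Without that step the contrapositive argument does not close, and this missing step is precisely the content of the conjecture.

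Beyond the main gap, two intermediate steps are asserted rather than proved. First, Terao's addition--deletion theorem cannot be applied ``iteratively'' without exponent bookkeeping: it requires that two of the three arrangements involved be free with exponents matching in a prescribed way, and deleting a hyperplane from a free arrangement need not yield a free arrangement; moreover the restriction of $c(\A_{G,\psi})$ to a hyperplane $x_i=\alpha_j y$ is not obviously again a cone over a $\psi$-graphical arrangement, so the inductive hypothesis may not apply to it. (Your alternative route to chordality --- localizing at $\bigcap_{v_iv_j\in E}\{x_i=x_j\}$ and using that freeness is inherited by localization, then invoking the equivalence of freeness and supersolvability for graphical arrangements --- is sound, and localization is essentially the mechanism the paper indicates for Theorem~\ref{THI}.) Second, the claim that peeling off a ``good'' simplicial vertex preserves or reflects freeness removes many hyperplanes at once and again needs justification; in the supersolvable case this is the standard modular-coatom/fiber-type construction, but here you would be applying it to an arrangement not yet known to be supersolvable. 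In short: the reduction to a stuck chordal configuration is plausible but incompletely justified, and the decisive non-freeness argument at that configuration is absent --- which is why the conjecture remains open in the paper.
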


We  are unable to prove this conjecture, but we do have the following
weaker result, which we simply state without proof. The proof involves
the inheritance of freeness under localization of arrangements and a
result of Yoshinaga \cite{8} on the freeness of 3-arrangements. 

\begin{theorem} \label{THI}
 The $\psi$-graphical arrangement $\A_{G,\psi}$ is not free if there
 is an edge $v_iv_j\in E(G)$ such that $\psi(v_i)\nsubseteq \psi(v_j)$
 and $\psi(v_j)\nsubseteq \psi(v_i)$. 
\end{theorem}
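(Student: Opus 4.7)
The plan is to suppose $\A_{G,\psi}$ is free and derive a contradiction by localization. Fix an edge $v_iv_j\in E(G)$ with $\psi(v_i)\nsubseteq\psi(v_j)$ and $\psi(v_j)\nsubseteq\psi(v_i)$, and consider the codimension-$3$ flat $X=\{x_i=x_j=y=0\}$ in the ambient space of the cone $c(\A_{G,\psi})$. Since freeness is preserved under localization, the sub-arrangement $\A_X:=\{H\in c(\A_{G,\psi}) : H\supseteq X\}$ must be free as well. Inspecting each type of hyperplane of $c(\A_{G,\psi})$, one checks that a hyperplane contains $X$ if and only if its defining linear form involves only the variables $x_i,x_j,y$; hence $\A_X$ consists of exactly $\{y=0\}$, $\{x_i=x_j\}$, the hyperplanes $\{x_i=\gamma y\}$ for $\gamma\in\psi(v_i)$, and the hyperplanes $\{x_j=\delta y\}$ for $\delta\in\psi(v_j)$. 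After factoring out the coordinates $x_k$ with $k\notin\{i,j\}$, this is a central essential $3$-arrangement of cardinality $N:=2+|\psi(v_i)|+|\psi(v_j)|$ in the variables $(x_i,x_j,y)$.

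The key step is to compute $\chi_{\A_X}(q)$. An inspection of triples of hyperplanes shows that the only rank-$2$ flats lying on more than two hyperplanes are the lines $\{x_i=x_j=\gamma y\}$ with $\gamma\in\psi(v_i)\cap\psi(v_j)$, each of multiplicity exactly $3$; every other rank-$2$ flat lies on exactly two hyperplanes. Setting $c:=|\psi(v_i)\cap\psi(v_j)|$, a routine M\"obius calculation gives
\[ \chi_{\A_X}(q) \;=\; (q-1)\Bigl(q^{2}-(N-1)\,q+\binom{N}{2}-N+1-c\Bigr). \]
The discriminant of the quadratic factor simplifies to $4c-(N-1)(N-3)$. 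The incomparability hypothesis forces $|\psi(v_i)|\geq c+1$ and $|\psi(v_j)|\geq c+1$, so $N\geq 2c+4$; consequently $(N-1)(N-3)\geq(2c+3)(2c+1)=4c^{2}+8c+3>4c$, and the discriminant is strictly negative.

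Therefore $\chi_{\A_X}(q)$ has an irreducible quadratic factor over $\mathbb{R}$. By Terao's factorization theorem the characteristic polynomial of a free arrangement factors into linear factors over $\mathbb{Z}$; equivalently, one may invoke Yoshinaga's theorem that freeness is combinatorially determined for $3$-arrangements. Either way, $\A_X$ is not free, contradicting the assumed freeness of $\A_{G,\psi}$. The most delicate part of the argument is the exact identification of $\A_X$ together with the complete enumeration of its multiplicity-$3$ flats; once these are pinned down, the remainder reduces to the M\"obius and discriminant computations above.
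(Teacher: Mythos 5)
Your overall strategy---localizing the cone at the rank-$3$ flat $X=\{x_i=x_j=y=0\}$ and analyzing the resulting central $3$-arrangement---is exactly the route the paper indicates for its (omitted) proof, and your identification of the hyperplanes of $\A_X$ is correct. The gap is in the determination of $L(\A_X)$: besides the $c$ triple lines $\{x_i=x_j=\gamma y\}$ with $\gamma\in\psi(v_i)\cap\psi(v_j)$, the line $\{x_i=y=0\}$ lies on $y=0$ \emph{and} on every hyperplane $x_i=\gamma y$, hence has multiplicity $1+|\psi(v_i)|$, and likewise $\{x_j=y=0\}$ has multiplicity $1+|\psi(v_j)|$. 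Writing $a=|\psi(v_i)|$ and $b=|\psi(v_j)|$, the correct characteristic polynomial is
\[
\chi_{\A_X}(q)=(q-1)\bigl(q^{2}-(a+b+1)q+ab+a+b-c\bigr),
\]
which agrees with your formula only when $a=b=1$. The discriminant of the quadratic factor is $(a-b)^{2}-2(a+b)+1+4c$, and this is \emph{not} always negative: for $a=5$, $b=1$, $c=0$ it equals $5$, and for $a=6$, $b=2$, $c=0$ it equals $1$, giving $\chi_{\A_X}(q)=(q-1)(q-4)(q-5)$.

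The last example shows the problem is not a fixable arithmetic slip: there are admissible $(a,b,c)$ for which $\chi_{\A_X}$ splits into integer linear factors, so Terao's factorization theorem cannot detect non-freeness, and your fallback remark that ``freeness is combinatorially determined for $3$-arrangements'' is not what Yoshinaga's theorem says (that would settle Terao's conjecture in rank $3$). What does close the argument is Yoshinaga's actual criterion: $\A_X$ is free if and only if $q^{2}-(a+b+1)q+ab+a+b-c$ equals $(q-e_1)(q-e_2)$, where $(e_1,e_2)$ are the exponents of the Ziegler restriction of $\A_X$ onto $y=0$. That restriction is the multiarrangement of the three lines $x_i=x_j$, $x_i=0$, $x_j=0$ with multiplicities $1,a,b$, whose exponents are $\{a,b+1\}$ when $a\ge b$ and $\{b,a+1\}$ when $b\ge a$; comparing constant terms then forces $c=\min(a,b)$, i.e.\ $\psi(v_i)\subseteq\psi(v_j)$ or $\psi(v_j)\subseteq\psi(v_i)$, contradicting the hypothesis. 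So the theorem stands, and your localization setup is the right first step, but the proof as written does not establish it.
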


\end{document}